\newtheorem{thm}{Theorem}
\newtheorem{cor}[thm]{Corollary}
\newtheorem{prop}[thm]{Proposition}
\newtheorem{lem}[thm]{Lemma}
\newtheorem{question}{Question}
\theoremstyle{definition}
\newcommand{\A}{\ensuremath{{\mathbb{A}}}}
\newcommand{\C}{\ensuremath{{\mathbb{C}}}}
\newcommand{\Z}{\ensuremath{{\mathbb{Z}}}}
\renewcommand{\P}{\ensuremath{{\mathbb{P}}}}
\newcommand{\Q}{\ensuremath{{\mathbb{Q}}}}
\newcommand{\F}{\ensuremath{{\mathbb{F}}}}
\begin{document}


\baselineskip=17pt



\title[Zeta Functions of Polynomials on $\overline{\F}_p$]{Transcendence of the Artin-Mazur Zeta Function for Polynomial Maps of $\A^1(\overline{\F}_p)$}

\author[A. Bridy]{Andrew Bridy}
\address{Andrew Bridy\\Department of Mathematics\\ University of Wisconsin-Madison\\
Madison, WI 53706, USA}
\email{bridy@math.wisc.edu}

\date{}

\begin{abstract}
We study the rationality of the Artin-Mazur zeta function of a dynamical system defined by a polynomial self-map of $\A^1(\overline{\F}_p)$, where $\overline{\F}_p$ is the algebraic closure of the finite field $\mathbb{F}_p$.  The zeta functions of the maps $x\mapsto x^m$ for $p\nmid m$ and $x\mapsto x^{p^m}+ax$ for $p$ odd, $a\in\mathbb{F}_{p^m}^\times$, are shown to be transcendental.
\end{abstract}

\subjclass[2010]{Primary 37P05; Secondary 11B85}

\keywords{Arithmetic Dynamics, Automatic Sequences, Finite Fields}

\maketitle

\section{Definitions and Preliminaries}
In the study of dynamical systems the Artin-Mazur zeta function is the generating function for counting periodic points.  For any set $X$ and map $f:X\to X$ it is a formal power series defined by
\begin{equation}
\zeta_f(X;t)=\exp\left( \sum_{n=1}^\infty \#(\text{Fix}(f^n))\frac{t^n}{n} \right).
\end{equation} 
We use the convention that $f^n$ means $f$ composed with itself $n$ times, and that $\text{Fix}(f^n)$ denotes the set of fixed points of $f^n$.  For $\zeta_f(X;t)$ to make sense as a formal power series we assume that $\#(\text{Fix}(f^n))<\infty$ for all $n$.  The zeta function is also represented by the product formula
\begin{equation*}
\zeta_f(X;t)=\prod_{x\in \text{Per}(f,X)}(1-t^{p(x)})^{-1}
\end{equation*}
where Per$(f,X)$ is the set of periodic points of $f$ in $X$ and $p(x)$ is the least positive $n$ such that $f^n(x)=x$.  This function was introduced by Artin and Mazur in the case where $X$ is a manifold and $f:X\to X$ is a diffeomorphism~\cite{AM}.  In this context $\zeta_f(X;t)$ is proved to be a rational function for certain classes of diffeomorphisms (e.g.~\cite{G,M}).  This shows that in these cases the growth of $\#(\text{Fix}(f^n))$ is determined by the finitely many zeros and poles of $\zeta_f$.  From this point onward we make the definition\begin{equation*}
a_n=\#(\text{Fix}(f^n)) 
\end{equation*}
for economy of notation.
\newline

We are interested in the rationality of the zeta function in an algebraic context, motivated by the following example.\newline

\noindent\textbf{Example:}  Let $X$ be a variety over $\F_p$ and let $f:X\to X$ be the Frobenius map, i.e. the $p$-th power map on coordinates.  Fix$(f^n)$ is exactly the set of $\F_{p^n}$-valued points of $X$.  Therefore $\zeta_f(X;t)$ is the Hasse-Weil zeta function of $X$, and is rational by Dwork's Theorem~\cite{Dwork}.\newline

We study a simple, yet interesting case: fix a prime $p$ and let $X=\A^1_{\F_p}$, the affine line over $\F_p$.  Let $f\in\overline{\F}_p[x]$, let $d=\deg f$, and assume that $d\geq 2$.  Consider the dynamical system defined by $f$ as a self-map of $\A^1(\overline{\F}_p)$.  The points in Fix($f^n$) are the roots in $\overline{\F}_p$ of the degree $d^n$ polynomial $f^n(x)-x$ counted \emph{without multiplicity}, so $a_n\leq d^n$.  If we consider $\zeta_f(t)$ as a function of a complex variable $t$, it converges to a holomorphic function on $\C$ in a disc around the origin of radius $d^{-1}$ (at least - it is not clear that $d^{-1}$ is the largest radius of convergence).  Our motivating question is:

\begin{question}
For which $f\in\overline{\F}_p[x]$ is $\zeta_f(\overline{\F}_p;t)$ a rational function?
\end{question}

If we count periodic points with multiplicity, then $a_n=d^n$ for all $n$ and Question 1 becomes completely trivial by the calculation

\begin{equation}\label{rationalZeta}
\zeta_f(\overline{\F}_p;t)=\exp\left( \sum_{n=1}^\infty\frac{d^nt^n}{n}\right)=\exp(-\log(1-dt))=\frac{1}{1-dt},
\end{equation}
so we count each periodic point only once.  A partial answer to our question is given by the following two theorems, which show that for some simple choices of $f$, $\zeta_f$ is not only irrational, but also not algebraic over $\Q(t)$.

\begin{thm}  
If $f\in\overline{\F}_p[x^p]$, then $\zeta_f(\overline{\F}_p,t)\in\Q(t)$.  In particular, if $p\mid m$, then $\zeta_{x^m}(\overline{\F}_p;t)\in\Q(t)$.  If $p\nmid m$, then $\zeta_{x^m}(\overline{\F}_p;t)$ is transcendental over $\Q(t)$.
\end{thm}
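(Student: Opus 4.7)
I would handle the rationality half first: if $f(x) = h(x^p)$, then $f'(x) = p\,x^{p-1} h'(x^p) = 0$ in characteristic $p$, and by induction $(f^n)'(x) = 0$ for all $n$. Consequently $(f^n(x) - x)' = -1 \ne 0$, so $f^n(x) - x$ is separable of degree $d^n$; this forces $a_n = d^n$, and identity~(\ref{rationalZeta}) then gives $\zeta_f(t) = 1/(1 - dt) \in \Q(t)$. Since $x^m \in \overline{\F}_p[x^p]$ whenever $p \mid m$, the second sentence of the theorem is immediate from the first.

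For the transcendence, I would first compute $a_n$ explicitly. From $f^n(x) - x = x(x^{m^n - 1} - 1)$ and the fact that in $\overline{\F}_p$ the polynomial $x^N - 1$ has exactly $N/p^{v_p(N)}$ distinct roots, I obtain $a_n = 1 + (m^n - 1)/p^{v_p(m^n - 1)}$. I would then argue by contradiction using Christol's theorem. Assume $\zeta_f$ is algebraic over $\Q(t)$. Because $\zeta_f \in \Z[[t]]$ (visible from the Euler product), I can clear denominators in a defining polynomial to get a primitive polynomial in $\Z[y,t]$, and reducing modulo $p$ yields a nonzero polynomial relation over $\F_p[y,t]$ satisfied by $\overline{\zeta_f}$. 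Hence $\overline{\zeta_f} \in \F_p[[t]]$ is algebraic over $\F_p(t)$, and Christol's theorem forces its coefficient sequence $(b_n \bmod p)_{n \ge 0}$ to be $p$-automatic.

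The heart of the argument will be to contradict this $p$-automaticity. I would expand $\zeta_f = \prod_r (1 - t^r)^{-c_r}$, with $c_r$ the number of length-$r$ orbits, and pass to $\F_p[[t]]$ using the Frobenius identity $(1 - t^r)^{p^i} = 1 - t^{r p^i}$, obtaining an explicit infinite product whose exponents are governed by the base-$p$ digits of the $c_r$. The arithmetic of the $c_r$---encoded through the multiplicative orders of $m$ modulo integers coprime to $mp$---then drives the analysis. The main obstacle is precisely this final step: one cannot short-circuit it through the logarithmic derivative, because a lifting-the-exponent calculation (with $d = \mathrm{ord}_p(m)$ and some $\alpha \in \F_p^\times$) gives the explicit formula $a_n \equiv 1 + \alpha\,\ell(n) \pmod p$ when $d \mid n$ and $a_n \equiv m^n \pmod p$ otherwise, where $\ell(n)$ is the last nonzero base-$p$ digit of $n$, and this sequence is visibly $p$-automatic. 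The failure of automaticity for $(b_n \bmod p)$ must therefore be extracted from a genuinely global feature of $\overline{\zeta_f}$, most plausibly by a direct $p$-kernel analysis of the Frobenius-simplified infinite product representation.
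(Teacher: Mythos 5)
Your rationality argument for $f\in\overline{\F}_p[x^p]$ is exactly the paper's: $f'=0$ identically, hence $(f^n(x)-x)'=-1$, so $f^n(x)-x$ is separable, $a_n=d^n$, and equation~(\ref{rationalZeta}) gives $\zeta_f=1/(1-dt)$. That half is complete, as is your formula $a_n=1+(m^n-1)/p^{v_p(m^n-1)}$.

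The transcendence half, however, has a genuine gap. You correctly diagnose that reducing the logarithmic derivative modulo $p$ cannot produce a contradiction, since $a_n \bmod p$ is $p$-automatic; but the alternative you then propose --- a direct $p$-kernel analysis of $\overline{\zeta_f}$ itself in $\F_p[[t]]$ via the Frobenius-simplified Euler product --- is only a plan, not an argument, and a hard one: the coefficients of $\zeta_f$ are partition-type convolutions of the orbit counts $c_r$, and nothing in your sketch extracts a non-automatic feature from them. The idea you are missing is that Corollary~\ref{ChristolCor} applies at \emph{every} prime, so you may reduce $\sum a_n t^{n-1}$ modulo an auxiliary prime $q\neq p$ and then play the two bases against each other with Cobham's theorem (which you never invoke). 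Concretely, for $p=2$: pick an odd prime $q\mid m$, so $a_n\equiv 1-r^{v_2(m^n-1)}\pmod q$ with $r=2^{-1}$ in $\F_q$; the subsequence $b_n=r^{v_2(m^{2n}-1)}=r^{v_2(n)+v_2(m^2-1)}$ is $q$-automatic by hypothesis, but it is also $2$-automatic because it depends only on $v_2(n)$ modulo the order $d$ of $r$, i.e.\ on a count of leading zeros of $(n)_2$, which a finite automaton recognizes. Cobham's theorem then forces $\{b_n\}$ to be eventually periodic, which $v_2(n)\bmod d$ is not for $d>1$. For $p>2$ one chooses $q>m^{p-1}$ with $q\not\equiv 1\pmod p$ and the subsequence indexed by $(p-1)((q-1)n+1)$, and runs the same engine with a short congruence argument defeating eventual periodicity. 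Without the auxiliary prime and Cobham's theorem, your proposal does not reach a contradiction.
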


\begin{thm}
If $a\in\F_{p^m}^\times$, $p$ odd and $m$ any positive integer, then $\zeta_{x^{p^m}+ax}(\overline{\F}_p;t)$ is transcendental over $\Q(t)$.
\end{thm}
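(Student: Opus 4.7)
The plan is to compute $a_n:=\#\mathrm{Fix}(f^n)$ in closed form by exploiting that $f$ is an additive polynomial, to factor $\zeta_f(t)$ as an algebraic prefactor times an infinite product $\prod_{j\geq 1}(1-w^{p^j})^{f_j}$ with $w=p^{me}t^e$ and non-integer rational exponents $f_j$, and to conclude transcendence by showing this product has infinitely many branch points.

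Because $a\in\F_{p^m}$ commutes with the $p^m$-power Frobenius $\phi^m$, the binomial theorem in the commutative subring generated by $\phi^m$ and scalars gives
\[
f^n(x)=\sum_{i=0}^n\binom{n}{i}a^{n-i}x^{p^{mi}},
\]
so $P(x):=f^n(x)-x$ is additive of degree $p^{mn}$ whose number of distinct roots in $\overline{\F}_p$ equals $p^{mn-j}$, where $j$ is the smallest index of a nonzero coefficient in the $p$-basis. Let $e$ denote the order of $a$ in $\F_{p^m}^\times$, so $\gcd(e,p)=1$. If $e\nmid n$, then $a^n-1\neq 0$ gives $j=0$ and $a_n=p^{mn}$; if $e\mid n$, Lucas's theorem identifies $\min\{i\geq 1:p\nmid\binom{n}{i}\}=p^{v_p(n)}$, so $j=mp^{v_p(n)}$ and $a_n=p^{m(n-p^{v_p(n)})}$. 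Substituting into $\log\zeta_f=\sum a_nt^n/n$, separating by whether $e\mid n$, reindexing $n=ek$ (using $v_p(ek)=v_p(k)$), and grouping by the value of $v_p(k)$ via the identity $\sum_{v_p(k)=j}w^k/k=p^{-j}\bigl[-\log(1-w^{p^j})+p^{-1}\log(1-w^{p^{j+1}})\bigr]$, a telescoping of the resulting logarithms yields
\[
\zeta_f(t)=\frac{(1-p^{me}t^e)^{(1-p^{-m})/e}}{1-p^mt}\cdot\prod_{j\geq 1}(1-w^{p^j})^{f_j},\qquad w:=p^{me}t^e,
\]
where $f_j=e^{-1}\bigl(p^{-j-mp^{j-1}}-p^{-j-mp^j}\bigr)>0$.

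The two prefactors are algebraic over $\Q(t)$, so the transcendence of $\zeta_f$ reduces to that of $W(w):=\prod_{j\geq 1}(1-w^{p^j})^{f_j}$ over $\Q(w)$. For $J\geq 1$ and any primitive $p^J$-th root of unity $\alpha$, the factors with index $k<J$ are analytic and nonvanishing at $w=\alpha$, while for $k\geq J$ one has $\alpha^{p^k}=1$ and the expansion $1-w^{p^k}=-(p^k/\alpha)(w-\alpha)(1+O(w-\alpha))$. Multiplying gives the local form
\[
W(w)=(\text{nonvanishing analytic})\cdot(w-\alpha)^{F_J},\qquad F_J:=\sum_{k\geq J}f_k,
\]
and the crude bound
\[
0<F_J\leq e^{-1}\sum_{k\geq 1}p^{-k-m}=\frac{1}{e(p-1)p^m}<1
\]
forces $F_J\notin\Z$, so $\alpha$ is a genuine branch point of $W$. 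As $J$ ranges over positive integers we obtain infinitely many branch points, whereas a function algebraic over $\C(w)$ has only finitely many (those lying above the roots of the discriminant of its minimal polynomial). Hence $W$, and therefore $\zeta_f(t)$, is transcendental over $\Q(t)$.

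The main obstacle I anticipate is the bookkeeping in deriving the product formula — in particular handling the various reindexings and the telescoping of logarithms so that the exponents $f_j$ emerge cleanly — and then verifying that the local exponents $F_J$ at the branch points are genuine non-integers rather than being accidentally cancelled, either internally or against the finitely many branch points contributed by the algebraic prefactor.
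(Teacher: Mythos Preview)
Your argument is correct and takes a genuinely different route from the paper. Both begin with the same closed form for $a_n$ via additivity and Lucas/Kummer (the paper only computes $a_n$ along the subsequence $(p^m-1)\mid n$, while you work with the exact order $e$ of $a$, which is a mild sharpening). From there the paper proceeds combinatorially: it reduces $\{a_n\}$ modulo an auxiliary prime $q$, invokes Christol's theorem to get $q$-automaticity, manufactures a sequence that is also $p$-automatic, applies Cobham's theorem to force eventual periodicity, and then contradicts periodicity by hand. Your approach instead packages the $a_n$ into an explicit infinite-product formula for $\zeta_f$ and argues analytically that the factor $W(w)$ has singularities at every primitive $p^J$-th root of unity, hence infinitely many, which no algebraic function can have. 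Your method yields more---an explicit closed form for $\zeta_f$---and avoids the automata machinery entirely; the paper's method, by contrast, is a reusable template that also drives Theorem~1 and does not require such a clean product formula to exist.

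One technical point worth tightening: the claimed local factorization $W(w)=(\text{nonvanishing analytic})\cdot(w-\alpha)^{F_J}$ is not valid in a full neighborhood of $\alpha$, because for each $j>J$ the factor $(1-w^{p^j})^{f_j}$ has branch points at the primitive $p^j$-th roots of unity, and these accumulate at $\alpha$ as $j\to\infty$. What does hold, and is all you need, is the radial asymptotic $W(r\alpha)\sim C(1-r)^{F_J}$ as $r\to 1^-$: write $\log(1-r^{p^j})=\log(1-r)+\log(1+r+\cdots+r^{p^j-1})$ for $j\ge J$, note the second summand is bounded by $j\log p$, and use $\sum_j jf_j<\infty$ to pass to the limit. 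Since at a regular point of an algebraic function the radial order of vanishing must be a nonnegative integer, and $0<F_J<1$, each primitive $p^J$-th root of unity is forced to be a singular point, giving the required infinitude.
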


Our strategy of proof depends heavily on the following two theorems.  Their proofs, as well as a good introduction to the theory of finite automata and automatic sequences, can be found in~\cite{AS}.

\begin{thm}[Christol]
The formal power series $\sum_{n=0}^\infty b_n t^n$ in the ring $\F_p[[t]]$ is algebraic over $\F_p(t)$ iff its coefficient sequence $\{b_n\}$ is $p$-automatic.
\end{thm}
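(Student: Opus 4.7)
My plan is to prove both directions through the \emph{Cartier (section) operators} $\Lambda_0,\ldots,\Lambda_{p-1}\colon \F_p[[t]]\to\F_p[[t]]$ defined by
\[
\Lambda_r\!\left(\sum_{n\geq 0} b_n t^n\right) \;=\; \sum_{n\geq 0} b_{pn+r}\, t^n.
\]
The basic identity driving everything is that, in characteristic $p$,
\[
f(t) \;=\; \sum_{r=0}^{p-1} t^{r}\, \bigl(\Lambda_r f\bigr)(t)^{p},
\]
obtained by splitting $f$ according to the residue class of $n$ modulo $p$ and using that $g(t)^p=g(t^p)$ for $g\in\F_p[[t]]$. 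The $\Lambda_r$ are essentially the digit-reading operations in base $p$, which is why they are the bridge between algebraicity on one side and the finiteness of the $p$-kernel (equivalently, $p$-automaticity) on the other.

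For the direction \emph{$p$-automatic implies algebraic}, I assume the $p$-kernel $K=\{(b_{p^k n+s})_n : k\geq 0,\ 0\leq s<p^k\}$ is finite, and I let $V\subset\F_p[[t]]$ be the finite-dimensional $\F_p$-vector space spanned by the generating series of the elements of $K$. By construction $V$ is stable under every $\Lambda_r$, so for a chosen basis $v_1,\ldots,v_s$ of $V$ the identity above expresses each $v_i$ as an $\F_p[t]$-linear combination of the $v_j^p$. Rearranging this into a matrix relation $M(t)\mathbf{v}=\mathbf{v}^{(p)}$, clearing denominators, and applying Cramer's rule (or triangularizing) yields a nontrivial polynomial equation over $\F_p(t)$ satisfied by each $v_i$; in particular $f\in V$ is algebraic over $\F_p(t)$.

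For the converse, \emph{algebraic implies $p$-automatic}, I start with a minimal polynomial for $f$ over $\F_p(t)$ and use Frobenius manipulations (essentially Ore's observation that any such relation can be rewritten as a nontrivial additive-polynomial equation $\sum_{i=0}^{k} B_i(t) f^{p^i}=0$) to show that the orbit of $f$ under the semigroup generated by $\Lambda_0,\ldots,\Lambda_{p-1}$ lies in a single finite-dimensional $\F_p(t)$-subspace $W\subset \F_p((t))$. With a basis of $W$ fixed, the action of each $\Lambda_r$ is given by a matrix over $\F_p(t)$; after a suitable normalization (scaling basis vectors so that all transitions take place inside a finite set) the data $(\text{basis vector},\ \text{input digit}\ r)\mapsto \Lambda_r(\text{basis vector})$ defines a finite automaton, whose output on a state is the constant term of the associated series. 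Feeding it the base-$p$ digits of $n$ from the most significant to the least produces $b_n$, so $(b_n)$ is $p$-automatic.

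The main obstacle is the finiteness claim in the forward direction: the automaton construction is mechanical once $W$ is produced, but showing that the $\Lambda$-orbit of $f$ lives in a finite-dimensional $\F_p(t)$-space (after suitable denominator clearing so that the orbit stays in $\F_p[[t]]$ rather than escaping into rational functions of ever-growing degree) requires the delicate algebraic reduction to additive-polynomial form and a careful bound on the resulting $\F_p(t)$-dimension.
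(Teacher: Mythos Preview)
The paper does not actually prove Christol's theorem: it states it and sends the reader to \cite{AS} for a proof, so there is no in-paper argument to compare against. Your sketch is essentially the standard Cartier-operator proof that appears in \cite{AS}, so in that sense you are aligned with what the paper is invoking.

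Two small comments on the sketch itself. In the $p$-automatic $\Rightarrow$ algebraic direction, the identity $f=\sum_r t^r(\Lambda_r f)^p$ gives $v_i=\sum_j a_{ij}(t)\,v_j^{\,p}$ (the $p$-th powers sit on the right, not the left); from here the clean way to finish is to note that the $\F_p(t)$-spans $\langle v_j\rangle\subseteq\langle v_j^{\,p}\rangle\subseteq\langle v_j^{\,p^2}\rangle\subseteq\cdots$ stabilize to a finite-dimensional $\F_p(t)$-space closed under $p$-th powers, forcing a linear dependence among $f,f^p,\ldots,f^{p^d}$ over $\F_p(t)$. ``Cramer's rule'' is not really the mechanism. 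In the converse direction, you correctly locate the crux, but be aware that landing in a finite-dimensional $\F_p(t)$-space is not by itself enough to build an automaton: one needs the $\Lambda$-orbit of $f$ to be a \emph{finite set}, which is exactly the degree-bounding step (using $\Lambda_r(g^p h)=g\,\Lambda_r(h)$ together with the Ore relation to absorb the bottom term) that your parenthetical about ``ever-growing degree'' is gesturing at.
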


\begin{thm}[Cobham]
For $p$, $q$ multiplicatively independent positive integers (i.e. $\log p/\log q\notin\Q$), the sequence $\{b_n\}$ is both $p$-automatic and $q$-automatic iff it is eventually periodic.
\end{thm}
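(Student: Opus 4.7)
The easy direction is that an eventually periodic sequence is $k$-automatic for every integer $k\ge 2$: its $k$-kernel, the set of subsequences $\{b_{k^i n+j}\}_n$ for $i\ge 0$ and $0\le j<k^i$, is finite because each such subsequence inherits eventual periodicity with period dividing the original, and Eilenberg's characterization of $k$-automaticity by finiteness of the $k$-kernel then applies.

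For the hard direction, suppose $\{b_n\}$ is both $p$-automatic and $q$-automatic with $\log p/\log q \notin \Q$. The plan is to exploit the two kernel-finiteness conditions together with the Diophantine information supplied by multiplicative independence. The key analytic input is that the additive subgroup $\Z\log p+\Z\log q\subset\R$ is dense, so for every $\varepsilon>0$ one may find $i,j\ge 0$ with $1\le p^i/q^j<1+\varepsilon$. Combining this with the finite-automaton descriptions of $b_n$ via base-$p$ and base-$q$ expansions of $n$, one shows that for large $n$ the value $b_n$ is stable under perturbations of $n$ on a scale much smaller than $n$: a change in the leading digits of the base-$p$ expansion is absorbed by a controlled change in the leading digits of the base-$q$ expansion, and vice versa. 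Running this symmetrically yields a modulus $M$ such that $b_n$ depends only on $n\bmod M$ for $n$ sufficiently large, i.e.\ $\{b_n\}$ is eventually periodic.

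The main obstacle is precisely this last passage from approximate stability to genuine eventual periodicity: translating a density statement about $p^i/q^j$ into a single period $M$ demands a delicate pigeonhole argument run against the finite state spaces of the two automata, keeping careful track of how the automata react to digit-level perturbations. Cobham's original proof handles this entirely by hand and is notoriously intricate; modern treatments (Hansel; Mkaouar; Durand) instead route through B\"uchi's characterization of $k$-automatic sets as those definable in $\langle \mathbb{N},+,V_k\rangle$ and apply Semenov's theorem on the intersection of two such structures. I would follow the kernel-based approach since it matches the language of Christol's theorem already in use, and defer the most technical estimates to the detailed exposition in~\cite{AS}.
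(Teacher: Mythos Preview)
The paper does not prove Cobham's theorem; it merely states it and refers the reader to~\cite{AS} for a proof. Your proposal goes further by sketching the standard architecture of the argument, but it is not a self-contained proof either: you correctly handle the easy direction via the $k$-kernel characterization, and for the hard direction you identify the key analytic ingredient (density of $\Z\log p+\Z\log q$ in $\R$) and the shape of the reduction, but you yourself flag that the passage from approximate stability to an honest period $M$ is the crux and defer those estimates to~\cite{AS}. So in the end both you and the paper treat Cobham's theorem as a black box from the literature; you simply say more about what is inside the box. If your goal is to match the paper, a one-line citation suffices. If your goal is to supply an actual proof, the sketch would need substantial fleshing out---specifically the pumping/pigeonhole argument on the two automata that turns ``$b_n$ is stable under small perturbations of $n$'' into a single modulus $M$ with $b_{n+M}=b_n$ for all large $n$; as written, that step is asserted rather than carried out.
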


The following is an easy corollary to Christol's theorem which we will use repeatedly~\cite[Theorem 12.6.1]{AS}.
\begin{cor}\label{ChristolCor}
If $\sum_{n=0}^\infty b_n t^n\in\Z[[t]]$ is algebraic over $\Q(t)$, then the reduction of $\{b_n\}$ mod $p$ is $p$-automatic for every prime $p$.
\end{cor}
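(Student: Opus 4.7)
The plan is to convert an algebraic relation for $f(t)=\sum b_n t^n$ over $\Q(t)$ into an algebraic relation for the reduction $\bar f(t)=\sum \bar b_n t^n$ over $\F_p(t)$, and then invoke Christol's theorem. Since $f$ is algebraic over $\Q(t)$, it satisfies some nonzero polynomial $P(t,y)=\sum_{i=0}^d p_i(t)y^i$ with coefficients $p_i(t)\in\Q(t)$. Clearing denominators, we may assume $p_i(t)\in\Z[t]$ for all $i$ and that $P(t,f(t))=0$.

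The next step, and the only real subtlety, is that after reducing each $p_i(t)$ modulo $p$, we must still have a nonzero polynomial. To arrange this, I would first factor out the gcd of the $\Z[t]$-coefficients of $P$ so that $P$ becomes primitive in $\Z[t][y]$ (i.e., the collection of integer coefficients of $P$, viewed as a polynomial in $t$ and $y$, has overall gcd $1$). Then for any prime $p$, not every $p_i(t)$ is divisible by $p$, so the reduced polynomial $\bar P(t,y)\in\F_p[t][y]$ is nonzero. Since reduction modulo $p$ is a ring homomorphism $\Z[[t]]\to\F_p[[t]]$, applying it to $P(t,f(t))=0$ yields $\bar P(t,\bar f(t))=0$, which witnesses that $\bar f(t)$ is algebraic over $\F_p(t)$.

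Finally, Christol's theorem (stated in the excerpt) applies to $\bar f(t)\in\F_p[[t]]$: since $\bar f$ is algebraic over $\F_p(t)$, its coefficient sequence $\{\bar b_n\}$ is $p$-automatic. Since $p$ was an arbitrary prime, the conclusion follows.

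The only step that might be called an obstacle is the primitivity argument, but this is purely a Gauss-lemma-style manipulation and entails no real difficulty; everything else is a direct application of Christol's theorem.
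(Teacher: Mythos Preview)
Your argument is correct and is exactly the standard proof; the paper does not give its own argument here but simply cites \cite[Theorem~12.6.1]{AS}, whose proof proceeds along the same lines (clear denominators, ensure primitivity so the reduction is nonzero, then apply Christol). So there is nothing to compare beyond noting that you have supplied the details the paper outsources.
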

We note that Corollary \ref{ChristolCor} will be applied to the logarithmic derivative $\zeta_f'/\zeta_f=\sum_{n=1}^\infty a_n t^{n-1}$, rather than to $\zeta_f$.\newline

Throughout this paper we use $v_p$ to mean the usual $p$-adic valuation, that is, $v_p(a/b)=\text{ord}_p(b)-\text{ord}_p(a)$.  We use $(n)_p$ as in \cite{AS} to signify the base-$p$ representation of the integer $n$, and we denote the multiplicative order of $a$ mod $n$ by $o(a,n)$, assuming that $a$ and $n$ are coprime integers.\newline

\section{Proof of Theorem 1}
\begin{proof}
Let $f(x)\in\overline{\F}_p[x^p]$, so that $f'(x)=0$ identically.  Then $f^n(x)-x$ has derivative $(f^n(x)-x)'=-1$, so it has distinct roots over $\overline{\F}_p$. Therefore $a_n=(\deg f)^n$ and $\zeta_f(\overline{\F}_p,t)$ is rational as in equation (\ref{rationalZeta}).\newline

Now suppose $f(x)=x^m$ where $p\nmid m$.  Assume by way of contradiction that $\zeta_f$ is algebraic over $\Q(t)$.  The derivative $\zeta_f'=d\zeta_f/dt$ is algebraic, which can be shown by writing the polynomial equation that $\zeta_f$ satisfies and applying implicit differentiation.  Hence $\zeta_f'/\zeta_f$ is algebraic.  We have
\begin{equation*}
\zeta_f'/\zeta_f=(\log\zeta_f)'=\sum_{n=1}^\infty a_n t^{n-1}
\end{equation*}
so in particular, $\zeta_f'/\zeta_f\in\Z[[t]]$.  By Corollary \ref{ChristolCor}, for every prime $q$ the reduced sequence $\{a_n\}$ mod $q$ is $q$-automatic.\newline

First we count the roots of $f^n(x)-x=x^{m^n}-x=x(x^{m^n-1}-1)$ in $\overline{\F}_p$.  There is one root at zero, and we write $m^n-1=p^ab$, where $p\nmid b$, so 
\begin{equation*}
x^{m^n-1}-1=x^{p^ab}-1=(x^b-1)^{p^a}.
\end{equation*}
The polynomial $x^b-1$ has derivative $bx^{b-1}$, and $(x^b-1,bx^{b-1})=1$, so $x^b-1$ has exactly $b$ roots in $\overline{\F}_p$, as does $x^{m^n}-1$.  Therefore
\begin{equation}\label{a_nCalculation}
a_n=1+\frac{m^n-1}{p^{v_p(m^n-1)}}.
\end{equation}
Now we need to reduce mod some carefully chosen prime $q$.  There are two cases to consider, depending on whether $p=2$.\newline

\noindent\underline{Case 1:} If $p=2$, let $q$ be a prime dividing $m$, $q\neq 2$.  There is such a prime because $m>1$ and $2\nmid m$.  Let $r = 2^{-1}$ in $\mathbb{F}_q$.  Reducing mod $q$,
\begin{equation}\label{a_nCase1}
a_n= 1+ \frac{m^n-1}{2^{v_2(m^n-1)}}\equiv 1-r^{v_2(m^n-1)}\pmod{q}.
\end{equation}
The subsequence $\{a_{2n}\}$ reduced mod $q$ is $q$-automatic because subsequences of automatic sequences indexed by arithmetic progressions are automatic~\cite[Theorem 6.8.1]{AS}.  We define the sequence $\{b_n\}$ as
\begin{equation*}
b_n=-(a_{2n}-1).
\end{equation*}
The sequence $\{b_n\}$ is $q$-automatic, because subtracting $1$ and multiplying by $-1$ simply permute the elements of $\mathbb{F}_q$.  We have $b_n=r^{v_2(m^{2n}-1)}$ by (\ref{a_nCase1}).  To proceed, we need the following proposition.\newline

\begin{prop}\label{v_pFormula}
\begin{enumerate}[i.]
\item For any $n,m\in\mathbb{N}$, $m$ odd, 
$$
v_2(m^{2n}-1)=v_2(n)+v_2(m^2-1).
$$
\item If $p$ is an odd prime and $n,m\in\mathbb{N}$, $p\nmid m$, then 
$$
v_p(m^{(p-1)n}-1)=v_p(n)+v_p(m^{p-1}-1).
$$
\end{enumerate}
\end{prop}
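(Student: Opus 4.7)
The plan is to recognize both parts as instances of the Lifting the Exponent (LTE) lemma, which controls $v_p(a^n - b^n)$ under suitable divisibility hypotheses.

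For part (ii), I would invoke the odd-prime form of LTE: for an odd prime $p$ with $p \nmid ab$ and $p \mid a - b$, one has $v_p(a^n - b^n) = v_p(a - b) + v_p(n)$. Applying this with $a = m^{p-1}$ and $b = 1$ works immediately: by Fermat's little theorem $p \mid m^{p-1} - 1$ whenever $p \nmid m$, so the hypotheses hold and the desired formula drops out.

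For part (i), I would use the 2-adic form: for odd $a, b$ with $4 \mid a - b$, one has $v_2(a^n - b^n) = v_2(a - b) + v_2(n)$. Take $a = m^2$ and $b = 1$. The crucial observation is that for odd $m$, factoring $m^2 - 1 = (m - 1)(m + 1)$ as a product of two consecutive even integers forces $8 \mid m^2 - 1$, in particular satisfying the $4 \mid a - b$ hypothesis. LTE then yields $v_2(m^{2n} - 1) = v_2(m^2 - 1) + v_2(n)$ as claimed.

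If LTE itself must be derived in place rather than cited, I would induct on $v_p(n)$. In the base case $p \nmid n$, the factorization $a^n - b^n = (a - b) \sum_{k=0}^{n-1} a^{n-1-k} b^k$ combined with $a \equiv b \pmod p$ makes the second factor congruent to $n b^{n-1} \pmod p$, which is a unit. The inductive step reduces to showing $v_p(a^p - b^p) = v_p(a - b) + 1$ by expanding $a^p - b^p = (a - b) \sum_{k=0}^{p-1} a^{p-1-k} b^k$ and tracking each term of the sum modulo higher powers of $p$. The main obstacle is the $p = 2$ case, where the cross terms cooperate only when $4 \mid a - b$; squaring supplies this automatically via $m^2 \equiv 1 \pmod 8$, which is precisely why the proposition is stated with exponent $2n$ rather than $n$.
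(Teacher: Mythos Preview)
Your proposal is correct. The paper in fact omits the proof entirely, remarking only that it is ``an elementary consequence of the structure of the unit group $(\Z/p^n\Z)^\times$'' and citing a reference. Your LTE-based argument is a standard and valid route to the same conclusion; indeed, the Lifting the Exponent lemma is essentially a repackaging of the cyclic structure of the $p$-Sylow subgroup of $(\Z/p^k\Z)^\times$, so the two viewpoints are closely related. Your write-up therefore supplies the details the paper leaves to the reader, and your care with the $p=2$ case (noting that $8\mid m^2-1$ for odd $m$, so the $4\mid a-b$ hypothesis is met) correctly identifies why the statement is formulated with exponent $2n$.
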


\begin{proof}
The proof is an elementary consequence of the structure of the unit group $(\Z/p^n\Z)^\times$, see for example \cite{Le}, and is omitted.
\end{proof}

By Proposition ~\ref{v_pFormula},
\begin{equation}\label{b_n}
b_n=r^{v_2(n)+v_2(m^2-1)}.
\end{equation}  Let $d=o(r,q)$, the multiplicative order of $r$ in $\mathbb{F}_q$, and note that $d>1$ because $r\neq 1$.  We see that $b_n$ is a function of $v_2(n)$ reduced mod $d$, and $v_2(n)$ is simply the number of leading zeros of $(n)_2$ (if we read the least significant digit first).

\begin{lem}\label{automatic}
If $\beta_n$ is a function of the equivalence class mod $d$ of $v_p(n)$, then the sequence $\{\beta_n\}$ is $p$-automatic.
\end{lem}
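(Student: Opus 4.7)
The plan is to construct an explicit deterministic finite automaton with output (DFAO) that, on input the base-$p$ expansion of $n$ read least-significant digit first, produces $\beta_n$; by the characterization of automatic sequences used in \cite{AS}, this directly establishes $p$-automaticity. Write $\phi : \Z/d\Z \to S$ for the function such that $\beta_n = \phi(v_p(n) \bmod d)$ for $n \geq 1$, where $S$ is a finite set containing all values of the sequence.

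The automaton I would build has $2d$ states, split into a ``pre'' cluster $\{q_0, \dots, q_{d-1}\}$ and an absorbing ``post'' cluster $\{f_0, \dots, f_{d-1}\}$. The start state is $q_0$. From $q_i$, reading the digit $0$ sends the machine to $q_{(i+1) \bmod d}$, while reading any nonzero digit sends it to $f_i$. From $f_i$, every digit fixes the state at $f_i$. The output map is $f_i \mapsto \phi(i)$, and the states $q_i$ receive an arbitrary default output to take care of the $n=0$ corner case.

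Correctness is immediate from the observation (already noted in the sentence preceding the lemma) that when $n \geq 1$ is read least-significant digit first, the length of the initial block of zeros before the first nonzero digit is precisely $v_p(n)$. Hence, after the automaton has consumed all digits of $n$, it sits in state $f_{v_p(n) \bmod d}$ and outputs $\phi(v_p(n) \bmod d) = \beta_n$. Since each $f_i$ is absorbing, the output is also invariant under appending further zeros to the input (equivalently, under prepending leading zeros to the standard base-$p$ expansion), which is the robustness a DFAO needs in order to compute a $p$-automatic sequence. I do not anticipate any real obstacle; the construction is explicit, the verification is a one-line induction on input length, and the only fiddly point is the boundary case $n = 0$, which is absorbed by the free choice of output on the $q_i$ states.
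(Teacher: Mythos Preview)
Your proposal is correct and follows essentially the same approach as the paper: both build a DFAO with a cycle of $d$ states $q_0,\dots,q_{d-1}$ traversed by reading leading zeros (least-significant digit first), exiting to an absorbing output state upon the first nonzero digit. Your write-up is more explicit about the absorbing states, the output map, and the $n=0$ boundary case, but the construction is the same.
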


\begin{proof}
We can build a finite automaton (with output) whose output depends on the equivalence class mod $d$ of the number of initial zeros of a string, as in Figure 1 for $d=4$.  There are $d$ states arranged in a circle (the $q_i$ in the figure), reading a zero moves from one of these states to the next, and reading any other symbol moves to a final state (the $r_i$) marked with the corresponding output.  Therefore $\beta_n$ is $p$-automatic.
\end{proof}

\begin{figure}[htb]
\includegraphics[scale=0.65]{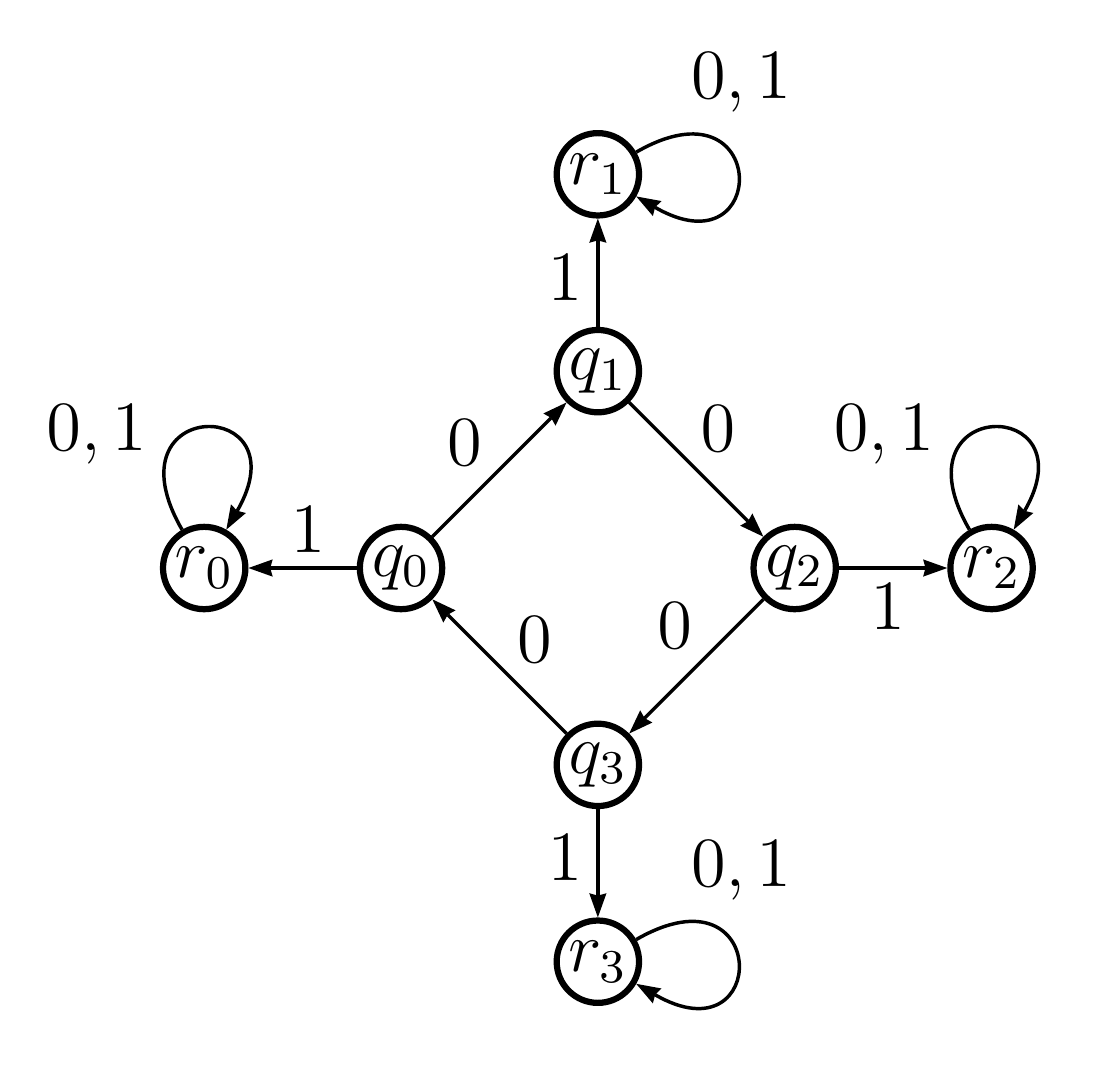}
\caption{State $q_0$ is initial. States $q_i$ and $r_i$ are reached after processing $i\bmod{4}$ leading zeroes.}
\end{figure}

By Lemma \ref{automatic}, $\{b_n\}$ is 2-automatic.  It is also $q$-automatic, so by Cobham's theorem $\{b_n\}$ is eventually periodic of period $k$.  For some large $n$, we have $b_{nk}=b_{nk+k}=b_{nk+2k}=\dots=b_{(n+a)k}$ for any positive integer $a$.  This means that $b_{Nk}=b_{nk}$ for all $N>n$.  By equation (\ref{b_n}),
\begin{equation*}
r^{v_2(Nk)+v_2(m^2-1)}=r^{v_2(nk)+v_2(m^2-1)}
\end{equation*} 
which means $v_2(Nk)\equiv v_2(nk)\pmod{d}$ and so $v_2(N)\equiv v_2(n)\pmod{d}$ for all $N>n$.  This is a contradiction, as $d>1$.\newline

\noindent\underline{Case 2:} If $p>2$, we pick some prime $q>m^{p-1}$ such that $q\not\equiv 1\pmod{p}$ (for example we can choose $q\equiv 2\pmod{p}$ by Dirichlet's theorem on primes in arithmetic progressions).  Clearly $q\nmid m$, so $m^{q-1}\equiv 1\pmod{q}$.  Let $r= p^{-1}$ in $\mathbb{F}_q$.  The sequence $\{a_n\}$ is as in equation (\ref{a_nCalculation}).  We take the subsequence $a_{(p-1)((q-1)n+1)}$ and reduce it mod $q$.  This subsequence is $q$-automatic.  We compute
\begin{align*}
a_{(p-1)((q-1)n+1)} & = 1 + \frac{m^{(p-1)((q-1)n+1)}-1}{p^{v_p(m^{(p-1)((q-1)n+1)}-1)}}  = 1 + \frac{(m^{q-1})^{(p-1)n}m^{p-1}-1}{p^{v_p(m^{(p-1)((q-1)n+1)}-1)}}\\
& \equiv 1 + (m^{p-1}-1)r^{v_p(m^{(p-1)((q-1)n+1)}-1)}\pmod{q}.
\end{align*}
As $m^{p-1}-1<q$ we can invert $m^{p-1}-1$ mod $q$.  If we subtract 1 and multiply by $(m^{p-1}-1)^{-1}$ as in Case 1, we get $$b_n=r^{v_p(m^{(p-1)((q-1)n+1)}-1)}$$ which is $q$-automatic.\newline

By Proposition \ref{v_pFormula}, $b_n=r^{v_p((q-1)n+1)+v_p(m^{p-1}-1)}$.  Let $d=o(r,q)$, noting that $d>1$.  Let 
\begin{equation*}
Y=\{n\in\mathbb{N}: v_p((q-1)n+1)\equiv 0\pmod{d}\}.
\end{equation*}
$Y$ is the fiber of $\{b_n\}$ over $r^{v_p(m^{p-1}-1)}$ and is therefore a $q$-automatic set (i.e. its characteristic sequence is $q$-automatic).  We argue that $Y$ is $p$-automatic.\newline

Consider a finite-state transducer $T$ on strings over $\{0,\dots,p-1\}$ such that $T((n)_p)=((q-1)n+1)_p$.  On strings with no leading zeros, $T$ is one-to-one.  Let $L$ be the set of base-$p$ strings $(n)_p$ such that $n\in Y$.  Then 
\begin{equation*}
T(L)=\{(n)_p: n\equiv 1\pmod{q-1}\hspace{.2in}\text{and}\hspace{.2in} v_p(n)\equiv 0\pmod{d}\}.
\end{equation*}
$T(L)$ is a regular language, as both of its defining conditions can be recognized by a finite automaton (for the second condition, this follows from Lemma \ref{automatic}).  Therefore $T^{-1}(T(L))=L$ is regular, that is, the characteristic sequence of $Y$ is $p$-automatic.  We use Cobham's theorem again to conclude that the characteristic sequence of $Y$ is eventually periodic.\newline

Let $\{y_n\}$ be the characteristic sequence of $Y$
$$y_n= \left\{
     \begin{array}{lr}
       1 & : n \in Y\\
       0 & : n \notin Y
     \end{array}
   \right.$$
and let $k$ be its (eventual) period.  Write $k$ as $k=Mp^N$, where $p\nmid M$ (it is possible that $N=0$).  As $q\not\equiv 1\pmod{p}$, $q-1$ is invertible mod $p$-powers, so we can solve the following equation for $n$.
\begin{equation}\label{Case2Eq1}
(q-1)n\equiv -1 + p^{dN}\pmod{p^{dN+2}}
\end{equation}
Any $n$ that solves this equation satisfies $v_p((q-1)n+1)=dN$ and so $y_n=1$.  Choose a large enough solution $n$ so that $\{y_n\}$ is periodic at $n$.  We can solve the following equation for $a$, and choose such an $a$ to be positive.
\begin{equation}\label{Case2Eq2}
(q-1)aM\equiv p^{(d-1)N}(p-1)\pmod{p^{dN+2}}
\end{equation}
Multiplying (\ref{Case2Eq2}) by $p^N$ gives
\begin{equation}\label{Case2Eq3}
(q-1)ak \equiv p^{dN+1}-p^{dN} \pmod{p^{dN+2}}.
\end{equation}
Adding (\ref{Case2Eq1}) and (\ref{Case2Eq3}) gives
\begin{equation*}
(q-1)(n+ak)\equiv -1 + p^{dN+1}\pmod{p^{dN+2}}
\end{equation*}
from which we conclude $v_p((q-1)(n+ak)+1)=dN+1$.  So $y_{n+ak}=0$.  But $y_n=y_{n+ak}$ by periodicity, which is a contradiction.
\end{proof}

\section{Proof of Theorem 2}

\begin{proof}

Let $f(x)=x^{p^m}+ax$ for $a\in\mathbb{F}_{p^m}^\times$, $p$ odd.  First we compute $f^n(x)$.

\begin{prop}
$f^n(x)=\sum_{k=0}^n {n\choose k} x^{p^{km}}a^{n-k}$
\end{prop}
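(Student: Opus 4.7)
The plan is to prove the formula by induction on $n$. The base case $n=0$ gives $f^0(x)=x=\binom{0}{0}x^{p^0}a^0$, and $n=1$ recovers the definition of $f$.

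For the inductive step, I would compute $f^{n+1}(x)=f(f^n(x))=\left(f^n(x)\right)^{p^m}+a\cdot f^n(x)$ using the inductive hypothesis. The key point is that in characteristic $p$ the Frobenius $u\mapsto u^{p^m}$ is additive, so raising the sum $\sum_k\binom{n}{k}x^{p^{km}}a^{n-k}$ to the $p^m$-th power distributes over the terms. Moreover, since $a\in\F_{p^m}^\times$ we have $a^{p^m}=a$ (and inductively $a^{p^{jm}}=a$), and since $\binom{n}{k}\in\F_p$ we have $\binom{n}{k}^{p^m}=\binom{n}{k}$. Combining these, $(f^n(x))^{p^m}$ reindexes (via $j=k+1$) to $\sum_{j=1}^{n+1}\binom{n}{j-1}x^{p^{jm}}a^{n-j+1}$, while $a\cdot f^n(x)=\sum_{j=0}^{n}\binom{n}{j}x^{p^{jm}}a^{n-j+1}$. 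Adding the two sums and invoking Pascal's identity $\binom{n}{j-1}+\binom{n}{j}=\binom{n+1}{j}$ produces the formula at index $n+1$, with the boundary terms $j=0$ and $j=n+1$ matching since $\binom{n+1}{0}=\binom{n+1}{n+1}=1$.

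Conceptually, $f$ is the sum of two commuting additive operators on $\overline{\F}_p$: the Frobenius $\phi\colon x\mapsto x^{p^m}$ and multiplication by $a$. They commute because $\phi(ax)=a^{p^m}x^{p^m}=a\,\phi(x)$, which is precisely where the hypothesis $a\in\F_{p^m}$ enters. Thus in the (noncommutative) ring of additive endomorphisms of $\overline{\F}_p$, $\phi$ and multiplication-by-$a$ generate a commutative subring, and the binomial theorem applies formally: $f^n=(\phi+a)^n=\sum_{k=0}^n\binom{n}{k}\phi^k a^{n-k}$. Evaluating at $x$ and once more using $a^{p^{km}}=a$ yields the claim.

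There is no real obstacle here: the whole argument is a routine induction. The one subtlety to guard against is the need for $a\in\F_{p^m}$, rather than merely $a\in\overline{\F}_p$, at every step where the Frobenius $x\mapsto x^{p^m}$ interacts with a power of $a$; if one omits that hypothesis, the clean binomial expansion fails.
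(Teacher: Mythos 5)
Your proof is correct and is essentially the paper's argument: the paper also views $f=\phi+a$ as a sum of commuting additive operators and invokes the binomial theorem, which your induction merely spells out. Your remark that $a\in\F_{p^m}$ is exactly what makes $\phi$ and multiplication by $a$ commute is a worthwhile point that the paper leaves implicit.
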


\begin{proof}
Let $\phi(x)=x^{p^m}$ and $a(x)=ax$, so $f=\phi+a$.  Both $\phi$ and $a$ are additive polynomials (they distribute over addition) and they commute, so the proof is simply the binomial theorem applied to $(\phi+a)^n$. 
\end{proof}

Assume that $\zeta_f$ is algebraic.  By Corollary ~\ref{ChristolCor}, the sequence $\{a_n\}$ reduced mod $q$ is $q$-automatic for every prime $q$, as is the subsequence $\{a_{(p^m-1)n}\}$ by previous remarks.  Now we need to compute $a_n$ when $p^m-1$ divides $n$.

\begin{prop}
If $p^m-1$ divides $n$, then $a_{n}=p^{(n-p^{v_p(n)})m}$.
\end{prop}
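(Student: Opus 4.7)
The plan is to recognize $f^n(x) - x$ as an additive ($p^m$-linearized) polynomial and compute the size of its kernel in $\overline{\F}_p$.

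Starting from $f^n(x) = \sum_{k=0}^n \binom{n}{k} a^{n-k} x^{p^{km}}$, we have
\[
f^n(x) - x = (a^n - 1)\,x + \sum_{k=1}^n \binom{n}{k}\, a^{n-k}\, x^{p^{km}}.
\]
The hypothesis $(p^m - 1) \mid n$, combined with $a \in \mathbb{F}_{p^m}^\times$, forces $a^n = 1$, eliminating the linear term and leaving a polynomial every monomial of which is a power of $x^{p^m}$.

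Next, I would identify the smallest surviving Frobenius index using Lucas' theorem. Writing $v = v_p(n)$, the last $v$ base-$p$ digits of $n$ are zero, so Lucas forces $p^v \mid k$ whenever $\binom{n}{k} \not\equiv 0 \pmod p$. The minimum positive such $k$ is therefore $k = p^v$, and Lucas gives $\binom{n}{p^v} \equiv n/p^v \not\equiv 0 \pmod p$. At the top, $k = n$ contributes $\binom{n}{n} = 1$.

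Finally, I would apply the standard root count for separable additive polynomials. The substitution $y = x^{p^{p^v m}}$ is an iterated Frobenius, hence a bijection on $\overline{\F}_p$, and it transforms $f^n(x) - x$ into
\[
Q(y) \;=\; \sum_{j=0}^{n - p^v} \binom{n}{j + p^v}\, a^{n - p^v - j}\, y^{p^{jm}}.
\]
Because $Q$ has nonzero $y$-coefficient $\binom{n}{p^v} a^{n - p^v}$, its formal derivative equals this nonzero constant; thus $Q$ is separable of degree $p^{m(n - p^v)}$ and has exactly $p^{m(n - p^v)}$ distinct roots in $\overline{\F}_p$. Pulling these roots back through the Frobenius bijection produces the same count for $f^n(x) - x$, yielding $a_n = p^{m(n - p^{v_p(n)})}$.

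The only step requiring real care is the Lucas-based identification of $k = p^v$ as the smallest positive index with $\binom{n}{k} \not\equiv 0 \pmod p$, together with the verification that the substitution $y = x^{p^{p^v m}}$ — despite its potentially huge Frobenius exponent — remains a bijection on $\overline{\F}_p$ and so preserves root counts. Once those observations are in hand, separability of $Q$ and the count of its roots are automatic.
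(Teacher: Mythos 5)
Your proof is correct and follows essentially the same route as the paper: identify $l=p^{v_p(n)}$ as the least positive index with $\binom{n}{l}\not\equiv 0 \pmod p$, then reduce to a separable additive polynomial with nonzero linear coefficient and count its $p^{(n-l)m}$ roots. The only cosmetic differences are that you use Lucas' theorem where the paper invokes Kummer's, and you factor $f^n(x)-x$ as $Q(x^{p^{lm}})$ (Frobenius on the inside) rather than as a $p^{lm}$-th power of a coefficient-twisted polynomial as the paper does.
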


\begin{proof}
The coefficient on $x$ in $f^{n}(x)$ is a power of $a^{p^m-1}=1$.  Let $l$ be the smallest positive integer such that ${n\choose l}\not\equiv 0\pmod{p}$.  Then 
\begin{equation*}
f^{n}(x)-x=\sum_{k=l}^{n} {n\choose k} x^{p^{km}}a^{n-k}
=\left(\sum_{k=l}^{n}{n\choose k} x^{p^{(k-l)m}}(a^{n-k})^{p^{-l}}\right)^{p^l},
\end{equation*}
where raising to the $p^{-l}$ power means applying the inverse of the Frobenius automorphism $l$ times.  Let $g(x)=\sum_{k=l}^{n}{n\choose k} x^{p^{(k-l)m}}(a^{n-k})^{p^{-l}}$.  The derivative $g'(x)=(a^{n-l})^{p^{-l}}$ is nonzero, so $g(x)$ has $p^{(n-l)m}$ distinct roots over $\overline{\F}_p$, as does $f^n(x)-x$.  So $a_n=p^{(n-l)m}$.\newline

Kummer's classic theorem on binomial coefficients mod $p$ says that $v_p({n\choose l})$ equals the number of borrows involved in subtracting $l$ from $n$ in base $p$~\cite{K}.  It is clear that the smallest integer $l$ that results in no borrows in this subtraction is $l=p^{v_p(n)}$, and we are done.\end{proof}

Let $q>p$ be a prime to be determined and let $r=p^{-1}$ in $\mathbb{F}_q$.  The sequence given by $b_n=r^{(p^m-1)nm}$ is eventually periodic and so is $q$-automatic.  Let $c_n=a_{(p^m-1)n}b_n$.  By ~\cite[Corollary 5.4.5]{AS} the product of $q$-automatic sequences over $\mathbb{F}_q$ is $q$-automatic, so $c_n$ is $q$-automatic.  So
\begin{align*}
c_n & =a_{(p^m-1)n}b_n=p^{((p^m-1)n-p^{v_p((p^m-1)n)})m}r^{(p^m-1)nm}\\
 & =(p^{-1})^{p^{(v_p(p^m-1)+v_p(n)})m}=(r^m)^{p^{v_p(n)}}.
\end{align*}
Choose $q>p^{mp}$ such that $q\equiv 2\pmod{p^m}$. Note that $o(r^m,q)$ divides $q-1$, so $o(r^m,q)\not\equiv 0\pmod{p}$ and $p$ is invertible mod $o(r^m,q)$.  The value of $c_n$ depends only on $p^{v_p(n)}$ reduced mod $o(r^m,q)$, which in turn is a function of $v_p(n)$ mod $o(p,o(r^m,q))$, so $c_n$ is $p$-automatic by Lemma ~\ref{automatic}.\newline

By Cobham's Theorem $c_n$ is eventually periodic, so the set 
\begin{align*}
Y & =\{n\in\mathbb{N}:c_n=r^m\}\\
  & =\{n\in\mathbb{N}:p^{v_p(n)}\equiv 1\pmod{o(r^m,q)}\}\\
  & =\{n\in\mathbb{N}:v_p(n)\equiv 0\pmod{o(p,o(r^m,q))}\}
\end{align*}has an eventually periodic characteristic sequence $\{y_n\}$.  Essentially the same argument as in Theorem 1, Case 2 shows this is a contradiction when $o(p,o(r^m,q))>1$.  We sketch the argument for completeness.\newline

As we chose $q>p^{mp}$, $o(r^m,q)=o(p^m,q)>p$, and $o(p,o(r^m,q))>1$.  Let $d=o(p,o(r^m,q))$, and let $k=Mp^N$ be the eventual period of $Y$, where $p\nmid M$.  We can solve
\begin{equation}\label{Thm2Eq1}
n\equiv p^{dN}\pmod{p^{dN+2}}
\end{equation}
\begin{equation}\label{Thm2Eq2}
aM\equiv p^{(d-1)N}(p-1)\pmod{p^{dN+2}}
\end{equation}
for large $n$ and positive $a$, so $y_n=1$.  Adding (\ref{Thm2Eq1}) and $p^N$ times (\ref{Thm2Eq2})  gives
\begin{equation*}
n+ak\equiv p^{dN+1}\pmod{p^{dN+2}}
\end{equation*}
from which we conclude $v_p(n+ak)=dN+1$, so $y_{n+ak}=0$, contradicting periodicity of $\{y_n\}$.  This contradiction shows that $\zeta_f$ is transcendental.
\end{proof}

\section{Concluding Remarks}
The polynomial maps in Theorems 1 and 2 are homomorphisms of the multiplicative and additive groups of $\overline{\F}_p$, respectively.  It should be possible to prove similar theorems for other maps associated to homomorphisms, e.g. Chebyshev polynomials, general additive polynomials, and Latt\`es maps on $\P^1(\overline{\F}_p)$.  See ~\cite{Silverman} for a discussion of special properties of these maps.\newline

It is more difficult to study the rationality or transcendence of $\zeta_f$ when the map $f$ has no obvious structure.  For example, there is a standard heuristic that the map $f(x)=x^2+1$ behaves like a random mapping on a finite field of odd order (see ~\cite{Bach}, ~\cite{Pollard}, ~\cite{Silverman2} and many others).  We conclude with the following tantalizing question without hazarding a guess as to the answer.

\begin{question}
For $p$ odd and $f=x^2+1$, is $\zeta_f(\overline{\F}_p,t)$ in $\Q(t)$?
\end{question}

\subsection*{Acknowledgements}
This research was partly supported by NSF grant no. CCF-0635355.  The author wishes to thank Eric Bach for many helpful suggestions and comments, Jeff Shallit for useful clarifications, and an anonymous referee for helpful remarks on style and presentation.

\end{document}